\documentclass[11pt]{amsart}
\usepackage[T1]{fontenc}
\usepackage{lmodern}
\usepackage{amsmath,amssymb,amsthm}
\usepackage[margin=1in]{geometry}
\usepackage{microtype}
\usepackage[hidelinks]{hyperref}
\usepackage{graphicx}

\newtheorem{theorem}{Theorem}
\newtheorem{lemma}[theorem]{Lemma}
\newtheorem{proposition}[theorem]{Proposition}
\newtheorem{corollary}[theorem]{Corollary}

\theoremstyle{definition}
\newtheorem{remark}[theorem]{Remark}

\DeclareMathOperator{\GL}{GL}
\DeclareMathOperator{\SL}{SL}
\newcommand{\Q}{\mathbb Q}
\newcommand{\Z}{\mathbb Z}
\newcommand{\R}{\mathbb R}
\newcommand{\F}{\mathbb F}

\title{Linearity of tubular groups}

\author[D.~T.~Wise]{Daniel T. Wise}
\email{daniel.wise@weizmann.ac.il}
\address{Dept. of Mathematics\\
Weizmann Institute of Science}

\date{\today}
\hypersetup{pdftitle={Linearity of tubular groups}}
\begin{document}
\begin{abstract}
We prove that a tubular group is linear if and only if it is residually
finite, equivalently virtually primitive. Every such group embeds in
$\SL_n(\Z[u])$. For a primitive tubular decomposition with at most $m$
edge slopes at each vertex, our representation has dimension
$2\max\{2,m\}$. In particular, Gersten's group embeds in $\SL_6(\Z[u])$,
and this dimension is minimal.
\end{abstract}
\maketitle

A \emph{tubular group} is a group that splits as a finite
graph of groups with vertex groups $\Z^2$ and edge groups $\Z$.
It is \emph{primitive} if each edge group maps onto
a maximal cyclic subgroup of each adjacent vertex group.
Thus its edge generators are primitive vectors $(p,q)$, with
$\gcd(p,q)=1$. It is \emph{virtually primitive} if it has a
finite-index subgroup whose induced splitting is primitive.
It is \emph{linear} if it embeds in $\GL_n(\F)$ for some integral domain $\F$
and some $n$. We then say it is \emph{$\F$-linear}.

\begin{theorem}\label{thm:main}
For a tubular group $G$, the following are equivalent:
\begin{enumerate}
\item $G$ is residually finite;
\item $G$ is virtually primitive;
\item $G$ is linear;
\item $G$ embeds in $\SL_n(\Z[u])$ for some $n$, where $u$ is an
indeterminate.
\end{enumerate}
In particular, every residually finite tubular group is $\R$-linear.
\end{theorem}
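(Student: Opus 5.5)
We will prove the cycle (4)$\Rightarrow$(3)$\Rightarrow$(1)$\Rightarrow$(2)$\Rightarrow$(4).

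(4)$\Rightarrow$(3) holds by definition, since $\Z[u]$ is an integral domain. (3)$\Rightarrow$(1) is Mal'cev's theorem: finitely generated linear groups over a commutative ring are residually finite, and $G$ is finitely generated. For the final sentence, note that $\Z[u]$ embeds in $\R$ by sending $u$ to a transcendental number such as $\pi$; hence (4) gives $\R$-linearity.

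For (1)$\Rightarrow$(2), let $G$ be residually finite. For each vertex group $G_v\cong\Z^2$ and each incident edge with image $\langle c\rangle$, let $\langle c_0\rangle$ be the maximal cyclic subgroup of $G_v$ containing $c$. So $c=c_0^k$, and we may take $k\ge 1$ after choosing signs.
\begin{itemize}
\item First we use residual finiteness, together with separability of the finitely many relevant elements, to find a finite-index normal subgroup $N$ with $c_0^j\notin N$ for $0<j<k$, for every such pair.
\item Next we enlarge the information by passing to a deeper finite-index subgroup, arranged so that in the induced splitting of $N$ each edge group is the full intersection of $N$ with the ambient maximal cyclic subgroup. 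Concretely, the image of $\langle c_0\rangle$ in $G/N$ should meet the image of $\langle c\rangle$ only as forced.
\end{itemize}
I expect this to be the delicate part. Residual finiteness has to be converted into the statement that $N\cap G_v$ meets $\langle c_0\rangle$ exactly in $N\cap\langle c\rangle$, for all edges simultaneously. One must also check that maximality of cyclic subgroups in the vertex groups $N\cap G_v^g$ of the induced splitting follows.

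The natural tool is a characterization of residual finiteness for tubular groups in terms of the edge slopes. The approach: look for a homomorphism from $G$ to a finite group in which the required equalities of orders hold, built from vertex quotients $\Z^2\to\Z^2/M_v$ that agree on edges. If needed, I would instead use the known result (Wise; Button) that a residually finite tubular group admits suitable compatible finite quotients of the vertex groups. I will assume that result is established in the body of the paper.

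For (2)$\Rightarrow$(4), linearity passes to finite-index overgroups by induction: if $H\le G$ has index $d$ and $H\le \SL_n(\Z[u])$, then $G\le \SL_{nd}(\Z[u])$, after a sign adjustment to land in $\SL$. So it suffices to embed a primitive tubular group.

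This is the main construction, and the core difficulty of the paper. Each vertex group $\Z^2$ is sent to a commuting pair in $\SL_{2m}(\Z[u])$. For example, take block-diagonal unipotent or polynomial-twisted matrices, one $2\times 2$ block per incident edge slope $(p,q)$. The blocks are chosen so that the edge element $a^pb^q$ has a prescribed common form, for instance conjugate to a fixed matrix $\begin{pmatrix}1&u\\0&1\end{pmatrix}$-type element.

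The edges then amalgamate and HNN-extend via conjugating matrices, and we verify injectivity by a ping-pong or normal-form argument. Here the degree in $u$ grows along reduced words: each syllable adds degree, because primitivity prevents cancellation of the leading coefficient. Primitivity is exactly what makes the leading terms of the vertex elements outside the edge group nonvanishing.
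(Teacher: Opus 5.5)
Your routine steps match the paper: (4)$\Rightarrow$(3), Mal'cev's theorem for (3)$\Rightarrow$(1), evaluating $u$ at a transcendental, and inducing from a finite-index subgroup. Your sketch of (1)$\Rightarrow$(2), however, aims at the wrong condition. Primitivity of the induced splitting at an edge end means $N\cap\langle c_0\rangle=N\cap\langle c\rangle$, i.e.\ $k$ divides the order of $c_0$ in $G/N$. Knowing $c_0^j\notin N$ for $0<j<k$ does not give this, since the order could be $k+1$. Nor is such divisibility a formal consequence of residual finiteness: in the residually finite group $\langle a,t\mid tat^{-1}=a^2\rangle$, the element $a$ has odd order in every finite quotient. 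This implication is exactly the theorem of Hoda--Wise--Woodhouse \cite[Thm.~1.1]{HWW}. The paper cites it rather than proving it, and you should cite it as well.

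The genuine gap is (2)$\Rightarrow$(4), which is the real content of the theorem. Your outline has the right shape: one two-dimensional summand per slope, a common normal form for edge generators, and degree in $u$ growing along reduced words. But the ingredients that make the degree count work are missing, and primitivity alone does not supply them.

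First, in the paper the vertex groups act by constant unipotent matrices. All $u$-dependence sits in the edge matrices, which must satisfy $\rho_v(c_e)X_e=X_e\rho_w(c_{\bar e})$. This works because $c_e$ acts trivially on the summand of its own slope, so the common normal form is $J=I_2\oplus J_2^{\oplus(d-1)}$. The shear $P(u)=I+uE$ inside the $I_2$ block then commutes with $J$, and $X_e=S_eP(\varepsilon_e u)S_{\bar e}^{-1}$ works.

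Second, non-cancellation needs a mechanism. The leading coefficient remains a multiple of the rank-one matrix $E$ because $EME=M_{21}E$. So everything reduces to the single entry $(S_{\bar e_i}^{-1}\rho(h_i)S_{e_{i+1}})_{21}$.

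Third, this entry must vanish \emph{exactly} at pinches. With the obvious block-adapted bases it vanishes whenever $e_{i+1}$ and $\bar e_i$ have different slopes. It also vanishes when they are distinct edges of the same slope and $h_i$ lies in the edge group. Primitivity handles only the case $e_{i+1}=\bar e_i$ and $h_i\notin\langle c_{\bar e_i}\rangle$. The paper covers the other cases by tilting the bases, using columns $p_\ell+q_j$, and inserting distinct offsets $r_e$ (integrally, $t_e$).

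Finally, one constant unipotent $2\times2$ block cannot carry $\Z^2$ faithfully, so the dimension is $2\max\{2,m\}$ rather than $2m$. Landing in $\Z[u]$ rather than $\Q[u]$ also requires unimodular bases with the same normal-form and detection properties. That is why the paper builds the lattices $R_{d-1}\oplus R_d/\Z\Pi$ and proves the integral-bases lemma.
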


We assign a two-dimensional summand to each edge slope, so that all
edge generators have the same unipotent Jordan form. A polynomial shear
then gives edge matrices. We choose the changes of basis so that a single
matrix entry vanishes exactly at a pinch. The following lemma shows that
each transition without a pinch increases the degree by one.

We give the integral refinement in Section~\ref{sec:integral}, and
illustrate degree growth for Gersten's group in Section~\ref{sec:gersten}.

This is the leading-coefficient method of
Shalen \cite[Prop.~1.3]{Shalen}. Closely related matrix-entry
combination criteria appear in Douba--Tsouvalas
\cite[Thm.~2.1]{DTentries} and \cite[Thm.~6]{DTflags}.

\section{The rational construction}\label{sec:rational}

The \emph{degree} of a polynomial matrix is the largest degree of an entry.
For a polynomial matrix $A(u)$, let $[u^k]A(u)$ denote the matrix
obtained by taking the coefficient of $u^k$ in each entry.

\begin{lemma}[Degree growth]\label{lem:degree}
Let $\F$ be an integral domain and let $N\ge2$.
Let $E\in M_N(\F)$ be the matrix with a single $1$ in position
$(1,2)$ and zeros elsewhere. Let $P(u)=I+uE$.
Suppose $W(u)\in M_N(\F[u])$ has degree $n$ and leading
coefficient $cE$, where $c\ne0$.
If $M\in M_N(\F)$ satisfies $M_{21}\ne0$ and $\varepsilon=\pm1$,
then $W(u)MP(\varepsilon u)$ has degree $n+1$ and leading coefficient
$\varepsilon cM_{21}E$.
\end{lemma}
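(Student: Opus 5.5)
We expand $W(u)$ by degree and track only the top terms. Write $W(u)=cE\,u^n+W_{n-1}u^{n-1}+\dots+W_0$ with $W_k\in M_N(\F)$. Since $P(\varepsilon u)=I+\varepsilon uE$ has degree one, every entry of $W(u)MP(\varepsilon u)$ has degree at most $n+1$. The only contribution to the coefficient of $u^{n+1}$ comes from the product of the top coefficient $cE$ of $W$ with the top coefficient $\varepsilon E$ of $P(\varepsilon u)$, namely
\[
[u^{n+1}]\bigl(W(u)MP(\varepsilon u)\bigr)=cE\,M\,\varepsilon E=\varepsilon c\,EME .
\]

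The key step is the elementary identity $EME=M_{21}E$. Writing $E=e_1e_2^{T}$ for the standard basis vectors $e_1,e_2$ of $\F^N$ (this uses $N\ge2$), we get $EME=e_1(e_2^{T}Me_1)e_2^{T}=M_{21}\,e_1e_2^{T}$. So the $u^{n+1}$ coefficient is $\varepsilon cM_{21}E$.

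Finally, this coefficient is nonzero: its $(1,2)$ entry is $\varepsilon cM_{21}$, and $\varepsilon=\pm1$ is a unit, while $c\ne0$ and $M_{21}\ne0$. Since $\F$ is an integral domain, the product is nonzero. Hence the degree is exactly $n+1$ and the leading coefficient is as claimed.

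The proof has no real obstacle. The only places where care is needed are the integral-domain hypothesis, which guarantees $cM_{21}\ne0$, and the observation that no lower-order terms of $W$ can reach degree $n+1$.
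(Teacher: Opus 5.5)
Your proposal is correct and follows essentially the same argument as the paper: you bound the degree by $n+1$, isolate the $u^{n+1}$ coefficient as $\varepsilon c\,EME$, and use $EME=M_{21}E$ together with the integral-domain hypothesis to see that this coefficient is nonzero. Your write-up spells out the identity via $E=e_1e_2^{T}$ and the nonvanishing of $\varepsilon cM_{21}$ a little more explicitly, but the argument is the same.
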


\begin{proof}
The degree is at most $n+1$, and $EME=M_{21}E$ gives
\[
[u^{n+1}]\bigl(W(u)MP(\varepsilon u)\bigr)
=\varepsilon cEME=\varepsilon cM_{21}E\ne0.
\qedhere
\]
\end{proof}

The leading coefficient remains a nonzero multiple of $E$, so the lemma
can be iterated.

\begin{proposition}\label{prop:rational}
Let $G$ admit a primitive tubular decomposition, and let $m$ be the
maximum number of distinct incident edge slopes at a vertex. Then
$G\hookrightarrow\SL_{2d}(\Q[u])$, where $d=\max\{2,m\}$.
\end{proposition}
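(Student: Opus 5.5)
Fix a primitive tubular decomposition of $G$ with underlying graph $\Gamma$ and choose a maximal tree $T\subset\Gamma$. The plan is to define a representation $\rho\colon G\to\SL_{2d}(\Q[u])$ on each vertex group and each stable letter, and then prove injectivity by Lemma~\ref{lem:degree} applied to reduced words in Bass--Serre normal form.

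\textbf{Vertex groups.} At a vertex $v$, list the distinct incident edge slopes $s_1,\dots,s_k$ with $k\le m\le d$, each a primitive vector. Complete them to at least two independent slopes if $k=1$; this is why $d\ge2$. Send $\Z^2_v$ into a block-diagonal group of $2\times2$ blocks. Take a homomorphism $\phi_v\colon\Z^2\to\Q^d$, $x\mapsto(\lambda_1(x),\dots,\lambda_d(x))$, where each $\lambda_i$ is a linear form with $\lambda_i(s_i)=0$ and the $\lambda_i$ are chosen generically. Then send $x\mapsto\exp\bigl(\sum_i\lambda_i(x)\,N_i\bigr)$, where $N_i$ is a nilpotent $2\times2$ block in slot $i$. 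Generic choice makes this injective and gives each edge generator $s_i$ a matrix with a zero $i$-th block and nonzero unipotent blocks elsewhere. If we insist that each edge generator have the same unipotent Jordan form (as the introduction says), we instead use $\lambda_j(s_i)\ne0$ for all $j$, tuned so that the conjugacy class is the same for all edge generators. The simplest option is for every edge generator to map to a unipotent element with $d$ Jordan blocks of size $2$. Indeed, $x\mapsto\bigoplus_j\begin{pmatrix}1&\lambda_j(x)\\0&1\end{pmatrix}$ is injective once two of the $\lambda_j$ are independent, and $s_i\mapsto$ a regular-in-each-block unipotent requires $\lambda_j(s_i)\ne0$ for all $i,j$. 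This is achievable generically over $\Q$.

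\textbf{Edges and stable letters.} For an edge $e$ joining $v,w$, the two images of the edge generator are conjugate in $\SL_{2d}(\Q)$, say by $C_e$. Define $\rho$ of the stable letter $t_e$ as $C_e^{-1}$ twisted by the polynomial shear $P(u)=I+uE$. Concretely, we conjugate so that the edge generator becomes $I+E'$ with $E'$ commuting with $E$, and set $t_e\mapsto A_e P(u) B_e$ with $A_e,B_e\in\SL_{2d}(\Q)$ chosen so that the relation $t_e^{-1}a t_e=b$ holds. For tree edges we take $u=0$ when amalgamating, or equally treat them with the shear, since both are HNN/amalgam edges. The relation holds because $P(u)$ commutes with the unipotent normal form of the edge element.

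\textbf{Injectivity.} Write a nontrivial element as a reduced word $g_0 t_1^{\varepsilon_1}g_1\cdots t_r^{\varepsilon_r}g_r$, where no pinch occurs, so $g_i$ does not lie in the relevant edge group when $\varepsilon_{i}=-\varepsilon_{i+1}$ on the same edge. Its image has the form $M_0P(\varepsilon_1u)M_1P(\varepsilon_2u)\cdots$, where each $M_i$ is a constant matrix assembled from $\rho(g_i)$ and the base-change matrices. The key point is to arrange the changes of basis so that $(M_i)_{21}=0$ exactly when $g_i$ lies in the edge group at a pinch. Then Lemma~\ref{lem:degree} iterates and gives degree $r$ with leading coefficient a nonzero multiple of $E$, so $\rho(g)\ne I$ when $r\ge1$. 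For $r=0$, injectivity on vertex groups suffices. Finally, conjugating by a generic constant matrix, or prepending $P$, handles the ends $M_0,M_r$.

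\textbf{Main obstacle.} The hardest step is the $(2,1)$-entry condition: $(M)_{21}$ must vanish exactly on the edge subgroup, and nowhere else on $\Z^2_v$ or on the cosets arising. The $(2,1)$ entry of $C^{-1}\rho(g)C'$ is a polynomial, affine in $\lambda_j(g)$ since the blocks are unipotent and of degree $\le1$ in each $\lambda_j$. We want it to vanish precisely on $\ker\lambda$ for the edge's line. I would first try placing the coordinates $e_1,e_2$ of $E$ across two different blocks through a generic rotation, so that the entry becomes a nonzero linear form in $\lambda(g)$ that is proportional to the "slope form" of the edge. Its zero set in $\Z^2$ is then exactly the cyclic edge group, using primitivity. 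When the two ends of a transition are different edges, genericity should ensure a nonzero entry for all $g$. If it does not, the fallback is to pass to the $d$-fold slope-indexed blocks, so that distinct slopes use disjoint coordinates.
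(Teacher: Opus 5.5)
There is a genuine gap. It sits exactly at the step you call the main obstacle, and the vertex representation you settle on makes that step impossible. Your architecture is the paper's: one $2\times2$ unipotent block per slope, edge matrices $A_eP(\varepsilon u)B_e$ with $P$ commuting with a common normal form, reduced path words, and iteration of Lemma~\ref{lem:degree}.

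Write $N_e=\rho_v(c_e)-I$. Commutation of the rank-one $E$ with a unipotent normal form forces the first basis vector to be fixed and the second coordinate functional to be invariant. So each transition entry has the form $\varphi(\rho_v(h)x)$, with $x\in\ker N_f$ and $\varphi$ vanishing on $\operatorname{Im}N_e$, where $e,f$ are the two edges at $v$ meeting at that transition.

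Now suppose every edge generator has Jordan type $J_2^{\oplus d}$, as in your second version. Then $\ker N_e=\operatorname{Im}N_e=Q$, the span of the block images, for every edge at $v$, and every $\rho_v(h)$ fixes $Q$ pointwise. Hence $\varphi(\rho_v(h)x)=\varphi(x)=0$ for all edges and all $h$. The entry vanishes identically, and no choice of bases or ``generic rotation'' can repair this, because the obstruction is the equality $\ker N_e=\operatorname{Im}N_e$.

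In fact the representation is then not faithful. Take a tubular group with one vertex and one edge $t$. Then $\rho(tht^{-1})$ and $\rho(h')$ both lie in the abelian group $\{I+T: TV\subseteq Q,\ TQ=0\}$. Yet $[tht^{-1},h']\ne1$ by Britton's lemma for suitable $h,h'$.

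The missing idea is that your first version was the right one, and it already gives a common Jordan form. With $\lambda_j(h)=\det(w_j,h)$ one has $\lambda_{j(e)}(c_e)=0$, and $\lambda_\ell(c_e)\ne0$ for $\ell\ne j(e)$ because the slopes are distinct. So every edge generator is conjugate to $I_2\oplus J_2^{\oplus(d-1)}$.

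The trivial block supplies a vector fixed by $c_e$ but pushed out of $\operatorname{Im}N_e$ by every other $h$. The paper uses the basis $p_j+r_eq_j,\ q_j,\ p_\ell+q_j,\ \lambda_\ell(c_e)q_\ell$ for $\ell\ne j=j(e)$, and puts the shear in the $I_2$ block. The entry is then $-1$ if $j(e)\ne j(f)$, and $\lambda_j(h)+r_f-r_e$ if $j(e)=j(f)=j$. Primitivity gives $\ker\lambda_j=\langle c_e\rangle$. Offsets $r_e\in[0,1)$, distinct for distinct same-slope edges at $v$, make the entry nonzero for every $h\in\Z^2$ when $e\ne f$. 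This is the precise content your appeal to genericity must supply, since it concerns infinitely many $h$.

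Two smaller points. Every edge, including tree edges, must carry a shear: setting $u=0$ on tree edges destroys the degree count, for example in an amalgam. And the end factors need no generic adjustment, since multiplication by invertible constant matrices preserves degree.
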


\begin{proof}
Let $H_v$ be the vertex group at $v$, and choose a generator of each
edge group. For an oriented edge $e:v\to w$,
let $c_e\in H_v$ and $c_{\bar e}\in H_w$ be its two images, where
$\bar e$ denotes the reverse edge. Reversing the edge exchanges these
two attaching elements. We use based path words, with the vertex-group
relations and the path relations
\[
e\bar e=1,\ c_e e=e c_{\bar e}.
\]
A \emph{pinch} is a subword $e h\bar e$ with
$h\in\langle c_{\bar e}\rangle$; these relations remove its two edges.

Let $V=\Q^{2d}$ with basis $p_1,q_1,\ldots,p_d,q_d$.
Fix a vertex $v$ and identify $H_v$ with $\Z^2$.
Choose primitive vectors $w_1,\ldots,w_d\in H_v$ of distinct
unoriented slopes, including all incident edge slopes, and let
$\lambda_j(h)=\det(w_j,h)$. Let
\[
\rho_v(h)p_j=p_j+\lambda_j(h)q_j,\qquad
\rho_v(h)q_j=q_j.
\]
Additivity makes $\rho_v$ a homomorphism, and two distinct slopes
give independent functionals, so it is faithful.

For each oriented edge, we now choose a basis with two purposes:
to put its attaching element into a common Jordan form, and to make
the $(2,1)$-entry of each transition matrix detect a pinch.
Choose rational numbers $r_e\in[0,1)$, distinct for distinct oriented
edges of the same slope starting at $v$.
Let $j(e)$ be the slope index of $c_e$.
For $j=j(e)$, let $S_e$ have columns
\[
p_j+r_e q_j,\quad q_j,\quad
p_\ell+q_j,\quad \lambda_\ell(c_e)q_\ell
\qquad(\ell\ne j),
\]
listing the last pairs in increasing order of $\ell$.
They form a basis since $\lambda_\ell(c_e)\ne0$ for $\ell\ne j$.
The first two columns are fixed by $\rho_v(c_e)$, and in every remaining
pair the second is added to the first. Thus
\begin{equation}\label{eq:normal}
\rho_v(c_e)S_e=S_eJ,\qquad
J=I_2\oplus J_2^{\oplus(d-1)},\qquad
J_2=\begin{pmatrix}1&0\\1&1\end{pmatrix}.
\end{equation}
For edges $e,f$ starting at $v$ and $h\in H_v$, the entry
$(S_e^{-1}\rho_v(h)S_f)_{21}$ is the second coordinate, in the
$S_e$-basis, of the image under $\rho_v(h)$ of the first column of $S_f$.
If $j(f)=\ell\ne j$, the equality $p_\ell=(p_\ell+q_j)-q_j$ gives
this coordinate as $-1$. If $j(f)=j$, subtracting
$p_j+r_e q_j$ leaves $(\lambda_j(h)+r_f-r_e)q_j$. Thus
\[
\bigl(S_e^{-1}\rho_v(h)S_f\bigr)_{21}=
\begin{cases}
-1,&j(e)\ne j(f),\\[2pt]
\lambda_j(h)+r_f-r_e,&j(e)=j(f)=j.
\end{cases}
\]
Each $\lambda_j(h)$ is integral, whereas distinct parameters differ by a
nonzero number in $(-1,1)$. Primitivity gives
$\ker\lambda_{j(e)}=\langle c_e\rangle$, so
\begin{equation}\label{eq:detection}
\bigl(S_e^{-1}\rho_v(h)S_f\bigr)_{21}=0
\quad\Longleftrightarrow\quad
e=f\ \text{ and }\ h\in\langle c_e\rangle.
\end{equation}

Repeat this construction at every vertex, with the same $V$ and $J$.
Multiply the first column of each $S_e$ by $(\det S_e)^{-1}$.
This preserves \eqref{eq:normal} and multiplies each transition entry
by a nonzero scalar, so \eqref{eq:detection} still holds.
Thus we may assume that every $S_e$ has determinant one.
Let $E$ and $P(u)=I+uE$ be as in Lemma~\ref{lem:degree}.
Choose signs $\varepsilon_e=\pm1$ with
$\varepsilon_{\bar e}=-\varepsilon_e$, and let
\begin{equation}\label{eq:edge}
X_e=S_eP(\varepsilon_e u)S_{\bar e}^{-1}.
\end{equation}
The shear $P(u)$ acts within the identity block of $J$, so it commutes
with $J$. Since $P(u)^{-1}=P(-u)$, we have
\[
X_{\bar e}=X_e^{-1},\qquad
\rho_v(c_e)X_e=X_e\rho_w(c_{\bar e})\quad(e:v\to w).
\]
Every vertex matrix and every edge matrix has determinant one.
Thus products along loops at a fixed base vertex define
$\rho:G\to\SL_{2d}(\Q[u])$.

Let $g\ne1$ and choose a based path word with the fewest edge traversals:
\[
g=h_0e_1h_1\cdots e_nh_n,\qquad
e_i:v_{i-1}\to v_i,\quad h_i\in H_{v_i},\quad v_n=v_0.
\]
If $n=0$, vertex-group faithfulness gives $\rho(g)\ne I$.
Suppose $n>0$. Minimality excludes pinches, so \eqref{eq:detection} gives
\[
M_i:=S_{\bar e_i}^{-1}\rho_{v_i}(h_i)S_{e_{i+1}}
\quad\text{satisfies}\quad (M_i)_{21}\ne0
\qquad(1\le i<n).
\]
Expanding the edge matrices gives
\[
\rho(g)=A\bigl(P(\varepsilon_{e_1}u)M_1P(\varepsilon_{e_2}u)\cdots
M_{n-1}P(\varepsilon_{e_n}u)\bigr)B,
\]
where $A=\rho_{v_0}(h_0)S_{e_1}$ and
$B=S_{\bar e_n}^{-1}\rho_{v_0}(h_n)$ are invertible constant matrices.
The first shear has degree one and leading coefficient $\pm E$.
Lemma~\ref{lem:degree} shows that each subsequent transition increases
the degree by one. Invertible constant factors preserve degree, so
$\deg\rho(g)=n>0$. Thus $\rho(g)\ne I$.
\end{proof}

We exhibit Proposition~\ref{prop:rational} on 
Gersten's group in Section~\ref{sec:gersten}.

\section{The integral refinement}\label{sec:integral}

We realize the same vertex representations on integral lattices.
The following lemma supplies the required changes of basis.

\begin{lemma}[Integral bases]\label{lem:frames}
Let $V$ be a free abelian group of finite rank and let
$N\in\operatorname{End}(V)$ satisfy $N^2=0$.
Suppose $x\in\ker N$ is nonzero and $\varphi:V\to\Z$ is a homomorphism satisfying
\[
\varphi N=0,\qquad \varphi(\ker N)=\Z,\qquad
\ker N\cap\ker\varphi=\Z x\oplus NV.
\]
Let $r=\operatorname{rank}NV$. Then there is a basis
\[
x,z,y_1,Ny_1,\ldots,y_r,Ny_r
\]
of $V$ in which $\varphi$ is the second coordinate function and
\[
I+N=J:=I_2\oplus J_2^{\oplus r},\qquad
J_2=\begin{pmatrix}1&0\\1&1\end{pmatrix}.
\]
\end{lemma}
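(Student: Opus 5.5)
The plan is to build the basis in two layers: first a basis of $\ker N$ adapted to $\varphi$, then lifts of a basis of $NV$ to complete it to $V$. Purity arguments should not be needed, because the hypotheses already give the relevant splittings.

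\emph{A basis of $\ker N$.} Since $\varphi(\ker N)=\Z$, choose $z\in\ker N$ with $\varphi(z)=1$. Then $\varphi|_{\ker N}$ splits, giving
\[
\ker N=\Z z\oplus(\ker N\cap\ker\varphi)=\Z z\oplus\Z x\oplus NV.
\]
The subgroup $NV$ is a subgroup of a free abelian group, hence free; let $r=\operatorname{rank}NV$. Since $N\colon V\to NV$ is surjective, we may choose a basis of $NV$ of the form $Ny_1,\ldots,Ny_r$ with $y_i\in V$. Then $x,z,Ny_1,\ldots,Ny_r$ is a basis of $\ker N$.

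\emph{Extending to $V$.} The map $N$ induces an isomorphism $V/\ker N\cong NV$. Since $NV$ is free with basis $Ny_i$, the classes of the $y_i$ form a basis of $V/\ker N$. The short exact sequence $0\to\ker N\to V\to V/\ker N\to0$ therefore splits via $\bar y_i\mapsto y_i$. Hence $x,z,y_1,Ny_1,\ldots,y_r,Ny_r$ is a basis of $V$; this is only a reordering of the basis of $\ker N$ together with the lifts. In particular $\operatorname{rank}V=2r+2$.

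\emph{Normalizing $\varphi$.} We have $\varphi(x)=0$ because $x\in\ker\varphi$, $\varphi(z)=1$, and $\varphi(Ny_i)=0$ because $\varphi N=0$. The only possible defect is that $\varphi(y_i)$ need not vanish. Replace $y_i$ by $y_i'=y_i-\varphi(y_i)z$. Since $z\in\ker N$, we get $Ny_i'=Ny_i$. The change of basis is unitriangular, so the list is still a basis, and now $\varphi(y_i')=0$. Thus $\varphi$ is the second coordinate function.

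\emph{The matrix of $I+N$.} The vectors $x$, $z$ and $Ny_i$ lie in $\ker N$, so $I+N$ fixes them. It sends $y_i'$ to $y_i'+Ny_i'$. Hence in each pair $(y_i',Ny_i')$ the first basis vector maps to itself plus the second, which gives the block $J_2$. The first two basis vectors give $I_2$, so $I+N=J=I_2\oplus J_2^{\oplus r}$.

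The only point needing care is the splitting of $V$ over $\ker N$: it must use that $NV$ is free and that $N$ induces $V/\ker N\cong NV$. After that, the $\varphi$-adjustment is harmless precisely because $z\in\ker N$.
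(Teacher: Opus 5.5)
Your proposal is correct and matches the paper's proof step for step. Both choose $z\in\ker N$ with $\varphi(z)=1$ to get $\ker N=\Z z\oplus\Z x\oplus NV$, take $y_i$ lifting a basis of $V/\ker N\cong NV$ (equivalently, with $Ny_i$ a basis of $NV$), and replace $y_i$ by $y_i-\varphi(y_i)z$; you also spell out the splitting of $V$ over $\ker N$ and the computation of the matrix of $I+N$, which the paper leaves implicit.
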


\begin{proof}
Choose $z\in\ker N$ with $\varphi(z)=1$. Then
$\ker N=\Z z\oplus\Z x\oplus NV$.
Since $V/\ker N\cong NV$ is free, lift a basis of this quotient to
$y_1,\ldots,y_r$. The vectors $Ny_i$ form a basis of $NV$.
Replacing $y_i$ by $y_i-\varphi(y_i)z$ makes $\varphi(y_i)=0$.
The displayed basis has the required properties.
\end{proof}

\begin{proposition}\label{prop:integral}
Let $G$ admit a primitive tubular decomposition, and let $m$ be the
maximum number of distinct incident edge slopes at a vertex. Then
$G\hookrightarrow\SL_{2d}(\Z[u])$, where $d=\max\{2,m\}$.
\end{proposition}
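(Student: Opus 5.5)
The plan is to rerun the proof of Proposition~\ref{prop:rational} verbatim, with every change of basis $S_e$ now in $\GL_{2d}(\Z)$. Once the vertex representations $\rho_v$ are integral, the bases $S_e$ are unimodular and satisfy \eqref{eq:normal}, and the transition entries obey the detection property \eqref{eq:detection}, the rest goes through unchanged. The edge matrices $X_e=S_eP(\varepsilon_eu)S_{\bar e}^{-1}$ lie in $\SL_{2d}(\Z[u])$ (after fixing the sign of one basis vector to get determinant $1$). The edge relations hold because $P(u)$ commutes with $J$. Faithfulness follows from Lemma~\ref{lem:degree} applied to a pinch-free path word, exactly as before. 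So all the work is in producing, at each vertex $v$, an integral $H_v$-lattice $V_v\cong\Z^{2d}$ and unimodular bases $S_e$. For this I will use Lemma~\ref{lem:frames}, applied once for each oriented edge $e$ at $v$, with $N_e=\rho_v(c_e)-I$.

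\textbf{Step 1 (vertex lattice).} I would take $V_v=\Z^d\oplus\Z^d$ with $h\cdot(y,z)=(y,z+A(h)y)$, where $A:H_v\to M_d(\Z)$ is additive. Then every $N_h$ squares to zero and the action is faithful as soon as $A$ is injective. The naive choice $A=\operatorname{diag}(\lambda_1,\ldots,\lambda_d)$, which is the integral form of $\rho_v$ in the rational proof, fails. For it $N_eV=\bigoplus_{\ell\ne j}\lambda_\ell(c_e)\Z q_\ell$ is not saturated, so $\ker N_e\cap\ker\varphi$ cannot equal $\Z x\oplus N_eV$. What I need is a pencil $A(h)=h_1A_1+h_2A_2$ such that:
\begin{itemize}
\item $\det A(h)=\pm\prod_j\lambda_j(h)$;
\item for each incident slope generator $c_e$, $A(c_e)$ has Smith form $\operatorname{diag}(1,\ldots,1,0)$.
\end{itemize}
The second condition makes $N_eV$ saturated of rank $d-1$, with $\ker N_e=\Z^d\oplus\ker A(c_e)$ of rank $d+1$. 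My first attempt is a lower bidiagonal pencil with diagonal entries $\lambda_1,\ldots,\lambda_d$ and subdiagonal entries linear forms $\mu_j$. Its determinant is automatically $\prod\lambda_j$. The minor obtained by deleting the first row and last column is $\prod\mu_j$, so the Smith condition holds if $\prod_j\mu_j(c_e)=\pm1$ at each slope. If this cannot be arranged with linear $\mu_j$, I would instead permute the diagonal so that a different $(d-1)$-minor is used for each slope. As a fallback I would enlarge the vertex module and check that the dimension still works out. \textbf{This step is the main obstacle}: it is where integrality genuinely differs from the rational case.

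\textbf{Step 2 (edge frames).} For each oriented edge $e$ at $v$, I need a nonzero $x_e\in\ker N_e$ and a functional $\varphi_e$ satisfying the hypotheses of Lemma~\ref{lem:frames}. I would choose them so that $\varphi_e$ evaluated on $\rho_v(h)x_f$ reproduces the detection formula. That is, it should be a fixed nonzero constant when $j(e)\ne j(f)$, and equal to $\lambda_j(h)+r_f-r_e$ when $j(e)=j(f)$. Now the offsets $r_e$ must be integers, so the rational trick of distinct parameters in $[0,1)$ is unavailable. Instead I would separate parallel edges by scaling: take $\varphi_e=K\cdot(\text{slope functional})+(\text{offset})$ with $K$ larger than the number of parallel edges, paired with $x_e$ chosen so that the offsets differ by integers strictly between $0$ and $K$. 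Since $\varphi_e(\ker N_e)=\Z$ is required, the scaling has to be placed on $x_e$ rather than on $\varphi_e$; I would check this saturation condition directly. Lemma~\ref{lem:frames} then gives a unimodular $S_e$ with $\rho_v(c_e)S_e=S_eJ$, whose first column is $x_e$ and whose second coordinate is $\varphi_e$. Hence $(S_e^{-1}\rho_v(h)S_f)_{21}=\varphi_e(\rho_v(h)x_f)$, and \eqref{eq:detection} follows from primitivity, $\ker\lambda_{j(e)}=\langle c_e\rangle$.

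\textbf{Step 3 (assembly).} Finally I would identify all the $V_v$ with $\Z^{2d}$, form the $X_e$, and repeat the degree count of Proposition~\ref{prop:rational} word for word. This gives $\deg\rho(g)=n>0$ for every $g\ne1$ whose minimal path word has $n>0$ edge traversals, and vertex faithfulness handles $n=0$.
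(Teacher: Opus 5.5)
Your reduction is sound. Once you have integral vertex lattices and frames satisfying \eqref{eq:normal} and \eqref{eq:detection}, everything else carries over from Proposition~\ref{prop:rational}. Your Step~1 target is also the right one: an injective additive pencil $A\colon H_v\to M_d(\Z)$ with $A(c_e)$ of Smith form $\operatorname{diag}(1,\ldots,1,0)$ at each incident slope. But you never produce such a pencil, and this is the entire content of the integral refinement.

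Your bidiagonal candidate fails in general. Its sufficient condition needs each $\mu_k$ to be $\pm1$ at every incident slope vector. This is impossible when, say, $(1,0),(0,1),(1,1)$ are all incident at a vertex, since then $\mu(1,1)=\mu(1,0)+\mu(0,1)\in\{0,\pm2\}$. The other-minors variant is not carried out. Enlarging the vertex module would give dimension larger than the $2d$ asserted.

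The missing idea is binary forms. Let $R_k$ be the lattice of integral binary forms of degree $k$ and $\Pi=L_1\cdots L_d$ with $L_j(w)=\det(w,w_j)$. Take $V_v=R_{d-1}\oplus(R_d/\Z\Pi)$, and let $A(h)$ be multiplication by $L_h$. Evaluation at a primitive $c$ maps $R_k$ onto $\Z$ with kernel $L_cR_{k-1}$. Hence multiplication by $L_j$ has kernel $\Z\Pi_j$, where $\Pi_j=\Pi/L_j$. Its image is $\{Q: Q(w_j)=0\}$, which is saturated with cokernel $\Z$ via $Q\mapsto Q(w_j)$. That is exactly your Smith condition, and the determinant of this pencil is a resultant, $\pm\prod_j\lambda_j(h)$.

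Step~2 also contains an error. In Lemma~\ref{lem:frames}, the condition $\ker N_e\cap\ker\varphi_e=\Z x_e\oplus N_eV$ couples $x_e$ and $\varphi_e$. Indeed $\ker N_e/N_eV\cong\Z^2$. Suppose $x_e$ has coefficient $t$ along the primitive generator $y_e$ of $\ker A(c_e)$. Then $\varphi_e$ must restrict on the second summand to $\pm t$ times the cokernel functional, and $\varphi_e(y_e,0)$ must be prime to $t$. So the scaling cannot be placed on $x_e$ alone: it appears on both, and the offsets must be prime to the scale.

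With that correction your idea becomes the paper's. Take $x_e=(t_e\Pi_j,K_j\Pi_j)$ with $K_j(w_j)=1$, and $\varphi_e(F,Q)=t_eQ(w_j)-\sum_iF(w_i)$. Here the $t_e$ are distinct for oriented edges of the same slope at $v$ and prime to $\pi_j=\Pi_j(w_j)$. The transition entries are $-t_f\pi_i$ across slopes and $\pi_j(t_et_f\lambda_j(h)+t_e-t_f)$ within slope $j$. The latter vanishes only if $t_e=t_f$ and $\lambda_j(h)=0$, because $|t_e-t_f|<t_et_f$. The term $-\sum_iF(w_i)$ is what makes the cross-slope entry a visibly nonzero constant, which your sketch asserts but does not arrange.
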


\begin{proof}
Let $H_v$ and $c_e$ be as in the proof of Proposition~\ref{prop:rational}.
The edge-matrix construction and degree argument there apply over any
integral domain. It therefore suffices to give faithful vertex
representations over $\Z$ and integral bases $S_e$ satisfying
\eqref{eq:normal} and \eqref{eq:detection}.

Let $R_k$ be the integer span of $\xi^k,\xi^{k-1}\eta,\ldots,\eta^k$.
For $h=(a,b)$, let $L_h=b\xi-a\eta$, so
$L_h(w)=\det(w,h)$. For a primitive vector $c$ and $k\ge1$, evaluation
at $c$ gives
\begin{equation}\label{eq:eval}
R_k\twoheadrightarrow\Z,\qquad F\longmapsto F(c),\qquad
\ker(\operatorname{ev}_c)=L_cR_{k-1}.
\end{equation}
Indeed, complete $c$ to an integral basis and use its coordinates
$(\xi,\eta)$. Then $L_c=\pm\eta$, and evaluation selects the coefficient
of $\xi^k$.

Fix a vertex $v$ and identify $H_v$ with $\Z^2$.
Choose primitive vectors $w_1,\ldots,w_d\in H_v$ of distinct
unoriented slopes, including all incident edge slopes. Let
\[
L_j=L_{w_j},\qquad \Pi=L_1\cdots L_d,\qquad
\Pi_j=\Pi/L_j,\qquad \pi_j=\Pi_j(w_j)\ne0,
\]
and let $\lambda_j(h)=L_h(w_j)=\det(w_j,h)$.
The polynomial $\Pi_j$ vanishes at every $w_i$ except $w_j$.
Successive division using \eqref{eq:eval} shows that a polynomial in
$R_d$ vanishing at all the $w_i$ is an integer multiple of $\Pi$;
in $R_{d-1}$ it is zero. Thus evaluation embeds both $R_{d-1}$ and
$R_d/\Z\Pi$ in $\Z^d$. Let
\[
V_v=R_{d-1}\oplus(R_d/\Z\Pi),
\]
which is free abelian of rank $2d$, and let
\[
\rho_v(h)(F,Q)=(F,Q+L_hF).
\]
Over $\Q$, evaluation gives a two-dimensional summand for each slope,
on which $\rho_v(h)$ is the shear
\[
(F(w_j),Q(w_j))\longmapsto
(F(w_j),Q(w_j)+\lambda_j(h)F(w_j)).
\]
Additivity makes $\rho_v$ a unipotent representation, and two distinct
slopes give independent functionals, so it is faithful.

We now choose bases that put the attaching elements into a common
normal form and detect pinches.

For an oriented edge $e$ starting at $v$, let $j=j(e)$ be the slope
index of $c_e$, so $c_e=\pm w_j$. Let $N_e=\rho_v(c_e)-I$.
Since $N_e(F,Q)=(0,\pm L_jF)$, cancellation and \eqref{eq:eval} give
\[
\ker N_e=\Z\Pi_j\oplus(R_d/\Z\Pi),\qquad
N_eV_v=\{(0,Q):Q(w_j)=0\}.
\]
Indeed, $L_jF\in\Z\Pi$ gives $F\in\Z\Pi_j$, while $Q(w_j)=0$
gives a representative $L_jH$ with $H\in R_{d-1}$.
Choose positive integers $t_e$, prime to $\pi_{j(e)}$, which are distinct
for distinct oriented edges of the same slope starting at $v$.
For each slope $j$, distinct positive integers congruent to $1$ modulo
$|\pi_j|$ suffice. Choose $K_j\in R_1$ with $K_j(w_j)=1$, and let
\[
x_e=(t_e\Pi_j,K_j\Pi_j),\qquad
\varphi_e(F,Q)=t_eQ(w_j)-\sum_i F(w_i).
\]
We have $N_ex_e=0$ and $\varphi_eN_e=0$. On the kernel,
\[
\varphi_e(a\Pi_j,Q)=t_eQ(w_j)-a\pi_j.
\]
Since $\gcd(t_e,\pi_j)=1$ and $Q(w_j)$ takes every integer value,
$\varphi_e(\ker N_e)=\Z$. Its value is zero precisely when
$a=st_e$ and $Q(w_j)=s\pi_j$ for some $s\in\Z$. Hence
\[
\ker N_e\cap\ker\varphi_e=\Z x_e\oplus N_eV_v.
\]
By Lemma~\ref{lem:frames}, there is a basis matrix
$S_e:\Z^{2d}\to V_v$ whose first column is $x_e$ and whose inverse
has second row $\varphi_e$, such that
\[
\rho_v(c_e)S_e=S_eJ,\qquad J=I_2\oplus J_2^{\oplus(d-1)}.
\]

For oriented edges $e,f$ starting at $v$ and $h\in H_v$, the transition
entry $(S_e^{-1}\rho_v(h)S_f)_{21}$ is
$\varphi_e(\rho_v(h)x_f)$. Let $j=j(e)$ and $i=j(f)$. We have
\[
\rho_v(h)x_f=(t_f\Pi_i,(K_i+t_fL_h)\Pi_i),
\]
so direct evaluation gives
\[
\bigl(S_e^{-1}\rho_v(h)S_f\bigr)_{21}=
\begin{cases}
-t_f\pi_i,&j\ne i,\\[2pt]
\pi_j\bigl(t_et_f\lambda_j(h)+t_e-t_f\bigr),&j=i.
\end{cases}
\]
Since $|t_e-t_f|<t_et_f$ and $\lambda_j(h)$ is integral, the second
expression vanishes precisely when $t_e=t_f$ and $\lambda_j(h)=0$.
Primitivity gives $\ker\lambda_{j(e)}=\langle c_e\rangle$, so
\eqref{eq:detection} holds.

Repeat this construction at every vertex and fix an integral basis
of each $V_v$. If $\det S_e=-1$, reverse the sign of its first column.
This preserves $\rho_v(c_e)S_e=S_eJ$ and changes only signs of the
transition entries, so \eqref{eq:detection} still holds.
Thus we may assume that every $S_e$ has determinant one.

With these bases, let $X_e$ be given by \eqref{eq:edge}.
The vertex and edge matrices lie in $\SL_{2d}(\Z[u])$.
The edge relations and the reduced-word argument in
Proposition~\ref{prop:rational} apply unchanged: a reduced based path
with $n>0$ edges has image of degree $n$, and vertex-group faithfulness
handles $n=0$. Thus the representation is faithful.
\end{proof}

\begin{proof}[Proof of Theorem~\ref{thm:main}]
The implication $(1)\Rightarrow(2)$ is due to Hoda--Wise--Woodhouse
\cite[Thm.~1.1]{HWW}.
For $(2)\Rightarrow(4)$, choose a finite-index subgroup $G_0$ with a
primitive tubular decomposition. Proposition~\ref{prop:integral}
gives a faithful representation $G_0\to\SL_N(\Z[u])$, with $N$ even.
Induction on the cosets $G/G_0$ gives a representation of $G$ over
$\Z[u]$ of dimension $N[G:G_0]$. It is faithful: an element in its
kernel fixes every coset, hence belongs to $G_0$, and acts on the
summand indexed by $G_0$ through the original faithful representation.
Its matrices permute the $N$-dimensional summands, with a determinant-one
block on each summand. The permutation contributes
$(\operatorname{sgn}\sigma)^N=1$, so the induced representation lies in
$\SL_{N[G:G_0]}(\Z[u])$.

The implication $(4)\Rightarrow(3)$ follows by passing to $\Q(u)$,
and $(3)\Rightarrow(1)$ is Mal\'cev's theorem \cite{Malcev} that
f.g.\ linear groups are residually finite.
Evaluating $u$ at a real number transcendental over $\Q$
embeds $\Z[u]$ in $\R$ and proves $\R$-linearity.
\end{proof}

\section{Gersten's group}\label{sec:gersten}
Gersten's group
\cite{Gersten} has the following presentation:
\begin{equation}\label{eq:gersten}
\langle a,b,x,y\mid
[a,b]=1,\ x^{-1}ax=ab,\ y^{-1}ay=ab^{-1}\rangle.
\end{equation}

\begin{corollary}
Gersten's group \eqref{eq:gersten}
embeds in $\SL_6(\Q[u])$ and in fact in $\SL_6(\Z[u])$.
\end{corollary}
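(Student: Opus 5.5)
The plan is to exhibit Gersten's group \eqref{eq:gersten} as a primitive tubular group with a single vertex and at most three incident edge slopes. Then Propositions~\ref{prop:rational} and~\ref{prop:integral} apply with $d=\max\{2,m\}=3$.

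First, read off the graph of groups. Take one vertex with vertex group $H=\langle a,b\rangle\cong\Z^2$ and two loops, with stable letters $x$ and $y$. The loop for $x$ has edge group generated by $a$, attached via $c_e=a=(1,0)$ and $c_{\bar e}=ab=(1,1)$. The loop for $y$ attaches via $a=(1,0)$ and $ab^{-1}=(1,-1)$. The presentation \eqref{eq:gersten} is exactly the fundamental group of this graph of groups: the HNN relations $x^{-1}ax=ab$ and $y^{-1}ay=ab^{-1}$ are the path relations $c_ee=ec_{\bar e}$ for the two loops. So $G$ is tubular.

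Second, check primitivity. Each attaching vector $(1,0)$, $(1,1)$, $(1,-1)$ has coprime entries, so each generates a maximal cyclic subgroup of $\Z^2$, and the decomposition is primitive.

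Third, count slopes at the vertex. The incident oriented edge ends are the four attaching elements $a$, $ab$, $a$, $ab^{-1}$. Their unoriented slopes are $(1,0)$, $(1,1)$, $(1,-1)$, which are three distinct slopes, so $m=3$.

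It is worth noting that two oriented edges share the slope of $a$. The construction in Proposition~\ref{prop:rational} allows this, because it separates such edges by distinct parameters $r_e$ over $\Q$, or distinct $t_e\equiv1\pmod{|\pi_j|}$ over $\Z$. Hence $d=3$, and Proposition~\ref{prop:rational} gives $G\hookrightarrow\SL_6(\Q[u])$, while Proposition~\ref{prop:integral} gives $G\hookrightarrow\SL_6(\Z[u])$.

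There is no serious obstacle here. The only step needing care is verifying that \eqref{eq:gersten} is precisely the presentation of this one-vertex, two-loop graph of groups, with the correct attaching maps.

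If one also wants to display the representation explicitly, one would take $w_1=(1,0)$, $w_2=(1,1)$, $w_3=(1,-1)$ and write down the three shear blocks $\rho(h)$. The matrices $S_e$ and the edge matrices $X_x=S_eP(u)S_{\bar e}^{-1}$, and similarly $X_y$, then follow from \eqref{eq:edge}. The degree growth guaranteed by Lemma~\ref{lem:degree} can be illustrated on a sample reduced word.
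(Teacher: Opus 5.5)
Your proposal is correct and follows the paper's proof. The paper likewise notes that the primitive splitting has the three edge slopes $(1,0),(1,1),(1,-1)$, so $d=3$, and then applies Propositions~\ref{prop:rational} and~\ref{prop:integral}; your extra checks (the presentation is the fundamental group of the one-vertex, two-loop splitting, and the shared slope of $a$ at the initial ends of $x$ and $y$ is separated by distinct $r_e$ or $t_e$) are the details that one-line argument leaves implicit.
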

\begin{proof}
The three edge slopes are $(1,0),(1,1),(1,-1)$. Then Propositions~\ref{prop:rational}~and~\ref{prop:integral} apply.
\end{proof}

\begin{remark}
Button proved that Gersten's group is not linear in positive
characteristic \cite[Cor.~4.7]{Button}, and has no faithful complex
representation of dimension at most four \cite[Thm.~7.1]{ButtonMin}.
Soroko extended the latter bound to five, as reported in
\cite[p.~148]{Button}. Thus our six-dimensional representation is
optimal. We give a proof of Soroko's result in
Proposition~\ref{prop:soroko}.
\end{remark}

Its three edge slopes are $(1,0),(1,1),(1,-1)$, so
Proposition~\ref{prop:rational} gives an embedding in $\SL_6(\Q[u])$.
We illustrate the construction the splitting of $G$ as a double HNN extension.
Let the basis be $(p_1,q_1,p_2,q_2,p_3,q_3)$, and let
\[
(\lambda_1,\lambda_2,\lambda_3)(a^m b^n)=(n,m-n,m+n).
\]
Only $x$ and $y$ have the same slope at their initial ends, so we may take
\[
(r_x,r_{x^{-1}},r_y,r_{y^{-1}})=(0,0,\tfrac12,0).
\]
With positive shear signs for $x,y$ and the determinant normalization
in the proof, the construction gives the following matrices
$A=\rho(a)$, $B=\rho(b)$, $X=\rho(x)$, and $Y=\rho(y)$.
\[
\setlength{\arraycolsep}{4pt}
A=\begin{pmatrix}
1&0&0&0&0&0\\
0&1&0&0&0&0\\
0&0&1&0&0&0\\
0&0&1&1&0&0\\
0&0&0&0&1&0\\
0&0&0&0&1&1
\end{pmatrix},\qquad
B=\begin{pmatrix}
1&0&0&0&0&0\\
1&1&0&0&0&0\\
0&0&1&0&0&0\\
0&0&-1&1&0&0\\
0&0&0&0&1&0\\
0&0&0&0&1&1
\end{pmatrix}.
\]
\[
\setlength{\arraycolsep}{3.5pt}
X=\begin{pmatrix}
-u&0&2&u&-u&0\\
0&0&0&1&0&0\\
1&0&0&0&0&0\\
0&1&0&0&0&0\\
0&0&0&0&1&0\\
0&0&0&0&0&\tfrac12
\end{pmatrix},\qquad
Y=\begin{pmatrix}
-u&0&-u&0&-2&u\\
-\frac{u}{2}&0&-\frac{u}{2}&0&-1&1+\frac{u}{2}\\
1&0&0&0&0&0\\
0&-1&0&0&0&0\\
0&0&1&0&0&0\\
0&0&0&\tfrac12&0&0
\end{pmatrix}.
\]

We illustrate the degree-growth argument using the integral representation
of Gersten's group. Identify
$\langle a,b\rangle$ with $\Z^2$ so that $a=(1,0)$ and $b=(0,1)$, with
coordinates $(\xi,\eta)$, and let $w_1,w_2,w_3$ be
$(1,0),(1,1),(1,-1)$. Then
\[
L_{a^mb^n}=n\xi-m\eta,\qquad
\Pi=\eta(\xi^2-\eta^2),\qquad
(\pi_1,\pi_2,\pi_3)=(-1,2,2).
\]
Only $x$ and $y$ have the same slope at their initial ends, so we may take
\[
(t_x,t_{x^{-1}},t_y,t_{y^{-1}})=(1,1,2,1).
\]
Let $K_j=\xi$ for each $j$. Use the basis $\xi^2,\xi\eta,\eta^2$ of $R_2$ followed by the basis
$\xi^3,\xi^2\eta,\xi\eta^2$ of $R_3/\Z\Pi$, in which $\eta^3=\xi^2\eta$.
With positive shear signs for $x,y$, one choice of the matrices $S_e$ gives
the following matrices $A=\rho(a)$, $B=\rho(b)$, $X=\rho(x)$ and
$Y=\rho(y)$.
\[
\text{\small\setlength{\arraycolsep}{3.5pt}$\displaystyle
A=\begin{pmatrix}
1 & 0 & 0 & 0 & 0 & 0 \\[1.5pt]
0 & 1 & 0 & 0 & 0 & 0 \\[1.5pt]
0 & 0 & 1 & 0 & 0 & 0 \\[1.5pt]
0 & 0 & 0 & 1 & 0 & 0 \\[1.5pt]
-1 & 0 & -1 & 0 & 1 & 0 \\[1.5pt]
0 & -1 & 0 & 0 & 0 & 1
\end{pmatrix},\qquad
B=\begin{pmatrix}
1 & 0 & 0 & 0 & 0 & 0 \\[1.5pt]
0 & 1 & 0 & 0 & 0 & 0 \\[1.5pt]
0 & 0 & 1 & 0 & 0 & 0 \\[1.5pt]
1 & 0 & 0 & 1 & 0 & 0 \\[1.5pt]
0 & 1 & 0 & 0 & 1 & 0 \\[1.5pt]
0 & 0 & 1 & 0 & 0 & 1
\end{pmatrix},$}
\]
\[
\text{\footnotesize\setlength{\arraycolsep}{2.6pt}$\displaystyle
X=\begin{pmatrix}
1-3u & 1 & -2u & -1+u & -1+u & -1+u \\[1.5pt]
0 & 1 & -1 & 0 & 0 & 0 \\[1.5pt]
3u & -2 & 1+2u & 1-u & 1-u & 1-u \\[1.5pt]
-3u & -1 & -2u & u & u & u \\[1.5pt]
0 & 1 & 0 & -1 & 0 & -1 \\[1.5pt]
3u & 0 & 2u & -u & -u & 1-u
\end{pmatrix},\quad
Y=\begin{pmatrix}
1-6u & 4 & 4-4u & -1+2u & 1-2u & -1+2u \\[1.5pt]
0 & 1 & 1 & 0 & 0 & 0 \\[1.5pt]
6u & -5 & -5+4u & 1-2u & -1+2u & 1-2u \\[1.5pt]
-3u & 1 & -2u & u & -u & u \\[1.5pt]
0 & -1 & 0 & 1 & 0 & -1 \\[1.5pt]
3u & 0 & 2u & -u & u & 1-u
\end{pmatrix}.$}
\]
They have determinant one and satisfy
$AB=BA$, $X^{-1}AX=AB$ and $Y^{-1}AY=AB^{-1}$.
For the pinchless word $y^{-1}byx^{-1}$, the two transition matrices are
\[
M_1=S_y^{-1}BS_y,\qquad M_2=S_{y^{-1}}^{-1}S_{x^{-1}},
\qquad (M_1)_{21}=4,\quad (M_2)_{21}=2.
\]
The successive products have the following degrees, witnessed by their
$(2,3)$-entries:
\[
\begin{array}{c|cccc}
W&Y^{-1}&Y^{-1}B&Y^{-1}BY&Y^{-1}BYX^{-1}\\ \hline
\deg W&1&1&2&3\\[2pt]
\text{leading term of }W_{23}&2u&2u&8u^2&-16u^3
\end{array}
\]
In the bases used in the proof, the leading-term calculations are
\[
(-uE)M_1(uE)=-4u^2E,\qquad
(-4u^2E)M_2(-uE)=8u^3E.
\]
Thus each further edge traversal contributes one power of $u$ with a
nonzero coefficient. Multiplication by the constant end factors gives
the leading terms in the table.

\section{The dimension bound}\label{sec:soroko}

\begin{proposition}[Soroko]\label{prop:soroko}
Gersten's group has no faithful representation in
$\GL_d(\mathbb C)$ for $d\le5$.
\end{proposition}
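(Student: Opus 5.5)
The plan is to argue by contradiction. Suppose $\rho:G\to\GL_d(\mathbb C)$ is faithful with $d\le5$, and write $A=\rho(a)$, $B=\rho(b)$, $X=\rho(x)$, $Y=\rho(y)$. First I reduce to the unipotent case. Since $A$ and $B$ commute, $\mathbb C^d$ splits into joint generalized eigenspaces indexed by characters $\chi$ of $\langle a,b\rangle\cong\Z^2$, with multiplicities. The relations make $a^k$ conjugate to $(ab)^k$ and to $(ab^{-1})^k$ for every $k$. Equating power sums gives
\[
\sum_\chi \chi(a)^k=\sum_\chi\chi(a)^k\chi(b)^k=\sum_\chi \chi(a)^k\chi(b)^{-k}
\]
for all $k$, counted with multiplicity. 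So the multisets $\{\chi(a)\}$, $\{\chi(ab)\}$ and $\{\chi(ab^{-1})\}$ coincide.

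From this I want to show that every $\chi(b)$, and then every $\chi(a)$, is a root of unity. I would first take absolute values and use an ordering argument: look at the characters where $\log|\chi(a)|$ is maximal, and use that multiplication by $\chi(b)^{\pm1}$ must preserve the multiset. Then I would repeat this for arguments modulo rational rotations, via a Kronecker/Galois-type specialization. Once all $\chi$ have finite order, I replace $a,b$ by suitable powers $a^N,b^N$. These satisfy the same relations: $x^{-1}a^Nx=(ab)^N=a^Nb^N$, and similarly for $y$. So the subgroup they generate together with $x,y$ is again a Gersten-type group. I would verify that this subgroup injects (it is the tubular group with the same slopes, index data aside), and on it $A$ and $B$ are unipotent.

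In the unipotent case, set $\mathcal A=\log A$ and $\mathcal B=\log B$. These are commuting nilpotent matrices, linearly independent because $\rho$ is faithful on $\Z^2$. Moreover $\mathcal A$, $\mathcal A+\mathcal B$ and $\mathcal A-\mathcal B$ are pairwise conjugate, via $X$ and $Y$. Under all conjugations in $G$, the vertex group's slopes $(1,0)$, $(1,1)$, $(1,-1)$ then share a single Jordan type $\tau$. I would then classify the possible Jordan types $\tau$ of a two-dimensional commuting nilpotent pencil $s\mathcal A+t\mathcal B$ in dimension $\le5$ that takes the type $\tau$ at three distinct projective points. The expectation is that dimension $\le5$ forces the pencil to be essentially "one slope-detecting block": the generic type equals $\tau$, and the joint kernel/image structure is too small. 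The comparison is with the proof of Proposition~\ref{prop:rational}, which needed a two-dimensional summand for each of the three slopes plus room for the shear.

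The key step, and the one I expect to be the main obstacle, is turning such a pencil into a nontrivial kernel element. I would try to show that in dimension $\le5$ the intertwiners $X$ and $Y$ are forced to preserve a common flag on which $\rho$ factors through a solvable, or virtually abelian-by-cyclic, quotient. Concretely, I would exhibit a specific nontrivial word in the kernel, for instance a commutator of conjugates such as $[a^{x},a^{y}]$ or an element of the form $[x a x^{-1}, y a y^{-1}]$. Both are nontrivial in $G$ by the reduced-word (no pinch) criterion used in Proposition~\ref{prop:rational}. The check would be that their images commute because every conjugate of $\mathcal A$ lies in a small abelian Lie algebra of nilpotents determined by $\ker\mathcal A$ and $\operatorname{im}\mathcal A$. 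For this case analysis I would follow Button's dimension-four argument and extend it to $d=5$ by splitting on the Jordan type of $\mathcal A$: the partitions of $5$ with at least two parts, since $\mathcal A$ and $\mathcal B$ commute independently. The partitions $(2,1,1,1)$, $(2,2,1)$ and $(3,1,1)$ should be handled by rank counting, and $(3,2)$ and $(4,1)$ by the centralizer being too small to contain an independent $\mathcal B$ with $\mathcal A\pm\mathcal B$ of the same type.
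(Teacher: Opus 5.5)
\textbf{There is a genuine gap in the second half.} The Jordan-type data of the pencil cannot give a contradiction on its own. Every type you list actually occurs in representations of $G$ on $\mathbb C^5$ with $\rho(b)$ unipotent, $\rho(b)\neq I$, and $\rho$ faithful on $\langle a,b\rangle$.

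For example, take $\mathcal A=J_4\oplus0$ and $\mathcal B=J_4^2\oplus0$. They commute, are linearly independent, and $\mathcal A$, $\mathcal A\pm\mathcal B$ all have type $(4,1)$. Choosing unipotent $X,Y$ with $X^{-1}J_4X=J_4+J_4^2$ and $Y^{-1}J_4Y=J_4-J_4^2$ gives a representation. The same works for $(3,2)$ and $(3,1,1)$, pairing $J_3\oplus J_2$ or $J_3\oplus0$ with $J_3^2\oplus0$, and for $(2,1,1,1)$ with $E_{21},E_{31}$. So the centralizer-size and rank-counting exclusions you anticipate are false. These representations fail to be faithful only for global reasons.

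Your bridge to a kernel element is also unsupported:
\begin{itemize}
\item nothing confines the conjugates of $\mathcal A$ to a small abelian algebra;
\item the image need not be solvable: factor through $G/\langle\langle b\rangle\rangle\cong\Z\times F_2$ and send $x,y$ to free generators of $\SL_2(\Z)$;
\item with $a^x=x^{-1}ax$, your word $[a^x,a^y]=[ab,ab^{-1}]$ is trivial in $G$.
\end{itemize}

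\textbf{The missing idea is irreducibility.} If $\rho$ is faithful, some irreducible composition factor $\sigma$ has $\sigma(b)\neq I$. Otherwise the free group $\langle b,xbx^{-1}\rangle$ would land in a unipotent, hence nilpotent, group. On $\sigma$, the semisimple part of $\sigma(a)$ commutes with $\sigma(G)$, so it is scalar and can be rescaled away. This is how the paper handles $a$. Your claim that every $\chi(a)$ is a root of unity is false: the one-dimensional representation $a\mapsto2$, $b,x,y\mapsto1$ satisfies all your multiset constraints.

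Now set $M_i=\log\sigma(ab^{1-i})$. Any common image or kernel of $M_0^k,M_1^k,M_2^k$ is invariant under $\sigma(x),\sigma(y)$, hence under $\sigma(G)$, hence trivial. This is exactly what the examples above violate; in the $(4,1)$ example, $\operatorname{Im}\mathcal A^3=\operatorname{Im}(\mathcal A\pm\mathcal B)^3$. This constraint drives the paper's case analysis:
\begin{itemize}
\item $M_0^2=0$ forces $d\ge 2r+2\ge6$;
\item at most two Jordan blocks forces every $\operatorname{Im}M_i^k$ to equal $\ker M_0\cap\ker M_1$;
\item type $(3,1,1)$ fails by a rank count.
\end{itemize}

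\textbf{Your first half is a genuine and more elementary alternative} to the paper's appeal to Button's quasi-unipotence theorem. Take any homomorphism $f$ to $\R$ from the finitely generated group generated by the joint eigenvalues. Comparing second power sums of $\{f(\chi(a))\}$ and $\{f(\chi(a))\pm f(\chi(b))\}$ gives $\sum_\chi f(\chi(b))^2=0$. Hence every $\chi(b)$ is a root of unity, with no ordering or Kronecker argument needed. Note, however, that faithfulness on $\Z^2$ gives only $\Q$-linear independence of $\log A$ and $\log B$, not $\mathbb C$-linear independence.
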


\begin{proof}
This depends crucially on Button's quasi-unipotence theorem
\cite[Thm.~4.5]{Button}, and the argument also recovers his dimension-four bound.

Suppose that $G$ as presented in \eqref{eq:gersten}
has a faithful complex representation of dimension at most five.
Since $ab^{-1},a,ab$ are conjugate, Button's theorem implies
that the eigenvalues of the image of $b$ are roots of unity.
For every $N\ge1$, the endomorphism
\[
a\mapsto a^N,\qquad b\mapsto b^N,\qquad x\mapsto x,\qquad y\mapsto y
\]
is injective: it is injective on $\langle a,b\rangle$ and preserves
reduced HNN words, since $h^N\in C$ if and only if $h\in C$
for every $h\in\langle a,b\rangle$ and every primitive attaching
subgroup $C$. Precomposing with this endomorphism for a suitable
$N$, we may assume that the image of $b$ is unipotent.

Some irreducible composition factor $\sigma$ satisfies
$\sigma(b)\ne I$: otherwise the subgroup
$\langle b,xbx^{-1}\rangle$, which is nonabelian free by
Britton's lemma, would embed in the nilpotent group acting
trivially on all composition factors.
The semisimple part of $\sigma(a)$ commutes with $\sigma(G)$,
by the conjugacy relations and uniqueness of Jordan decomposition,
and is therefore scalar. Rescaling $\sigma(a)$ preserves the
relations and irreducibility, so we may assume that both
$\sigma(a)$ and $\sigma(b)$ are unipotent.
Let $V$ be the representation space and let $d=\dim V\le5$.

Let
\[
D=\log\sigma(b),\qquad
M_i=\log\sigma(ab^{1-i})=M_0-iD\quad(i=0,1,2).
\]
The $M_i$ are commuting, conjugate nilpotents, and $D\ne0$.
They are nonzero, since otherwise conjugacy would give $M_0=M_1=0$
and hence $D=0$.
Let $X=\sigma(x)$ and $Y=\sigma(y)$. We have
\[
X^{-1}M_1X=M_0,\qquad Y^{-1}M_1Y=M_2.
\]
Thus a common image or kernel of the same positive power of all
three $M_i$ is invariant under $X,Y$, and also under
$\sigma(a),\sigma(b)$. By irreducibility, no such subspace is
proper and nonzero.
Consequently $M_0$ and $M_1$ have distinct images and distinct
kernels, since equality would force the corresponding space
for $M_2=2M_1-M_0$ to coincide as well.

Suppose first that $M_0^2=0$. Conjugacy gives $M_1^2=M_2^2=0$,
and commutation gives
\[
0=M_2^2=(2M_1-M_0)^2=-4M_0M_1.
\]
Hence
\[
\operatorname{Im}M_0+\operatorname{Im}M_1
\ \subseteq \ 
\ker M_0\cap\ker M_1.
\]
Let $r=\operatorname{rank}M_0$. The distinct images and kernels imply $r+1\le d-r-1$. Moreover $r\ne1$, since two
rank-1 maps with distinct images and kernels have a
rank-2 linear combination $2M_1-M_0$. Thus $d\ge2r+2\ge6$.

Now let $k\ge2$ be maximal with $M_0^k\ne0$.
Since $d\le5$, the largest Jordan block is unique, so each
$\operatorname{Im}M_i^k$ is a line.
If $M_0$ has at most two Jordan blocks, then $\dim\ker M_0\le2$.
The restriction of $M_1$ to $\ker M_0$ is nonzero and nilpotent,
since the kernels are distinct. Hence $\dim\ker M_0=2$ and
$\ker M_0\cap\ker M_1$ is a line.
Each $\operatorname{Im}M_i^k$ is a line invariant under the
commuting nilpotents $M_0,M_1$, which therefore vanish on it.
Thus all three images equal $\ker M_0\cap\ker M_1$, a contradiction.
The only remaining Jordan type is $(3,1,1)$.

Let $K=\ker M_0$ and $\ell=\operatorname{Im}M_0^2$.
Choose a Jordan chain $M_0v_3=v_2$, $M_0v_2=v_1$, so that
\[
V=\langle v_3,v_2\rangle\oplus K,\qquad \ell=\langle v_1\rangle.
\]
Commutation and nilpotence give
\[
Dv_3=\alpha v_2+w,\qquad Dv_2=\alpha v_1,\qquad Dv_1=0,
\qquad w\in K.
\]
If $DK\subseteq\ell$, then
$\operatorname{Im}(M_0-sD)^2\subseteq\ell$ for every $s$.
For $s=0,1,2$, this gives the forbidden common image $\ell$.
Otherwise choose $z\in K$ with $Dz\notin\ell$.
For $s\ne0$ and $1-s\alpha\ne0$, the vectors
\[
\begin{aligned}
(M_0-sD)v_2&=(1-s\alpha)v_1,\\
(M_0-sD)z&=-sDz,\\
(M_0-sD)v_3&=(1-s\alpha)v_2-sw
\end{aligned}
\]
lie respectively in $\ell-\{0\}$, $K-\ell$,
and $V- K$, so they are linearly independent.
But $M_1$ and $M_2$ both have rank 2, forcing
$1-\alpha=1-2\alpha=0$, which is impossible.
\end{proof}


\begin{thebibliography}{9}
\raggedright


\bibitem{ButtonMin}
J.~O. Button,
\emph{Minimal dimension faithful linear representations of
common finitely presented groups},
preprint (2016),
\href{https://arxiv.org/abs/1610.03712}{arXiv:1610.03712}.

\bibitem{Button}
J.~O. Button,
\emph{Free by cyclic groups and linear groups with restricted
unipotent elements},
Groups Complex. Cryptol. \textbf{9} (2017), no.~2, 137--149.
\href{https://doi.org/10.1515/gcc-2017-0009}
{doi:10.1515/gcc-2017-0009}.

\bibitem{DTentries}
S.~Douba and K.~Tsouvalas,
\emph{Matrix entries, unipotents, and linearity of amalgams},
preprint (2026),
\href{https://arxiv.org/abs/2603.23969}{arXiv:2603.23969}.

\bibitem{DTflags}
S.~Douba and K.~Tsouvalas,
\emph{On transversality in flag manifolds and linearity of amalgams},
preprint (2026),
\href{https://arxiv.org/abs/2607.28863}{arXiv:2607.28863}.

\bibitem{Gersten}
S.~M.~Gersten,
\emph{The automorphism group of a free group is not a
$\operatorname{CAT}(0)$ group},
Proc.\ Amer.\ Math.\ Soc.\ \textbf{121} (1994), no.~4, 999--1002.

\bibitem{HWW}
N.~Hoda, D.~T.~Wise, and D.~J.~Woodhouse,
\emph{Residually finite tubular groups},
Proc.\ Roy.\ Soc.\ Edinburgh Sect.~A \textbf{150} (2020),
2937--2951.
\href{https://doi.org/10.1017/prm.2019.52}{doi:10.1017/prm.2019.52}.

\bibitem{Malcev}
A.~I.~Mal'cev,
\emph{On isomorphic matrix representations of infinite groups},
Rec.\ Math.\ [Mat.\ Sbornik] N.S.\ \textbf{8(50)} (1940), no.~3,
405--422.

\bibitem{Shalen}
P.~B.~Shalen,
\emph{Linear representations of certain amalgamated products},
J.\ Pure Appl.\ Algebra \textbf{15} (1979), no.~2, 187--197.

\end{thebibliography}
\end{document}